\documentclass[11pt]{article}
\usepackage[utf8]{inputenc}
\usepackage[dvipsnames]{xcolor}
\usepackage{graphicx}
%,color,verbatim}
\usepackage{amsmath}
\usepackage{amsfonts}
\usepackage{amssymb}
\usepackage{amsthm}
\usepackage{thmtools}
\usepackage[margin=25mm]{geometry}

\setlength{\parindent}{0pt}
\setlength{\parskip}{10pt}
\usepackage{caption}
\usepackage{enumitem,calc}
\usepackage[longnamesfirst,numbers,sort&compress]{natbib}

\usepackage[unicode=true]{hyperref}
\hypersetup{
colorlinks,
linkcolor={black},
citecolor={black},
urlcolor={blue!60!black}
}

\usepackage[noabbrev,capitalise,nameinlink]{cleveref}
\crefname{lemma}{Lemma}{Lemmas}
\crefname{theorem}{Theorem}{Theorems}
\crefname{corollary}{Corollary}{Corollaries}
\crefname{proposition}{Proposition}{Propositions}
\crefname{figure}{Figure}{Figures}
%\crefname{conj}{Conjecture}{Conjectures}
%\crefname{open}{Open Problem}{Open Problems}
%\crefformat{equation}{(#2#1#3)}
%\Crefformat{equation}{Equation #2(#1)#3}

\bibliographystyle{plainnat}

\theoremstyle{plain}
\newtheorem{theorem}{Theorem}

\newtheorem{lemma}[theorem]{Lemma}

\newtheorem{observation}[theorem]{Observation}

\theoremstyle{definition}

\DeclareMathOperator{\tw}{tw}
\DeclareMathOperator{\rtw}{rtw}

%This must be caps as \part is reserved

\DeclareMathOperator{\polylog}{polylog}
\newcommand{\scr}[1]{\mathcal{#1}}
\newcommand{\ceil}[1]{\lceil#1\rceil}
\newcommand{\floor}[1]{\lfloor#1\rfloor}
\newcommand{\ds}[1]{\mathbb{#1}}
\newcommand{\eps}{\epsilon}
\newcommand{\UWT}[2]{#1\langle #2\rangle}

\newcommand{\Apex}[2]{#1^{+#2}}

\newcommand{\defn}[1]{\textcolor{Maroon}{\emph{#1}}}

\renewcommand{\geq}{\geqslant}
\renewcommand{\leq}{\leqslant}

\begin{document}

\title{\bf{Proper Minor-Closed Classes in Blowups of Fans}}
\author{Marc Distel\,\footnotemark[2]}
\date{\today}

\footnotetext[2]{School of Mathematics, Monash University, Melbourne, Australia (\texttt{marc.distel@monash.edu}). Research supported by Australian Government Research Training Program Scholarship.}

\maketitle

\begin{abstract}
    Dujmovi\'{c} et al. [arXiv:2407.05936] showed that any $n$-vertex planar graph is contained in a $O(\sqrt{n}\log^2(n))$-blowup of a fan, and asked if the same holds for any $n$-vertex $K_t$-minor-free graph. We answer this in the positive, showing that such a graph is contained in a $O_t(\sqrt{n}\log^2(n))$-blowup of a fan.
\end{abstract}

\section{Introduction}

    The strong product of graphs $G$ and $H$ is the graph with vertex set $V(G)\times V(H)$ and edges between two distinct pairs of vertices if they are adjacent or equal in each coordinate. There has been substantial recent interest in describing complex graphs as subgraphs of strong products of simple graphs \citep{rtwltwMCC,Distel2022Surfaces,Illingworth2022,Distel2024,Dvorak2023,Dujmovic2024,Ueckerdt2022}. In particular, several papers have shown that various complex graphs are contained in blowups of simple graphs \citep{Illingworth2022,Distel2024,Dvorak2023,Dujmovic2024}, where the \defn{$b$-blowup} of a graph $H$ is the graph $H\boxtimes K_b$, and a graph $G$ is \defn{contained} in a graph $G'$ if $G$ is isomorphic to a subgraph of $G'$. We seek to make $H$ as simple as possible while keeping $b$ relatively small. Specifically, when studying $n$-vertex graphs $G$ with treewidth $O(\sqrt{n})$, we want $H$ to have treewidth at most some absolute constant $k$, and we want $b\in \Tilde{O}(\sqrt{n})$, where $\Tilde{O}$ hides any $\polylog(n)$ terms. Note that this would imply $G$ has treewidth at most $(k+1)b\in \Tilde{O}(\sqrt{n})$, thus these blowup results are qualitative strengthenings of classical treewidth results by \citet{Lipton1979} and \citet{Alon1990}.

    In this paper, we focus on proper minor-closed classes, which have treewidth $O(\sqrt{n})$~\citep{Alon1990}. See \citet{Dvorak2023} for a more general study about blowup structure of graph classes that admit $O(n^{1-\eps})$-balanced separators. 
    
    There has been substantial recent progress on this topic. \citet{Illingworth2022} showed that every $n$-vertex $K_h$-minor-free graph $G$ is contained in a $O_h(\sqrt{n})$-blowup of a graph of treewidth $h-2$, where $O_h$ hides multiplicative dependence on $h$. \citet{Distel2024} extended this, decreasing the treewidth to $4$ while remaining a $O_h(\sqrt{n})$-blowup, although we remark that the dependence on $h$ in the $O_h$ is much worse than in \citet{Illingworth2022}'s result.
    
    \citet{Distel2024} also showed that planar graphs are contained in a $O(\sqrt{n})$-blowup of a graph of treewidth $2$. They also asked if this could be improved to a $O(\sqrt{n})$-blowup of a graph of bounded pathwidth. \citet{Dujmovic2024} answered this up to a $\polylog(n)$ factor, showing that planar graphs are contained in a $\Tilde{O}(\sqrt{n})$-blowup of a fan, where a \defn{fan} is a graph obtained from a path by adding a vertex, called the \defn{centre}, adjacent to every other vertex. We remark that fans have pathwidth $2$.
    
    \begin{theorem}[\citet{Dujmovic2024}]
        \label{planarNonFlex}
        For each $n\in\mathbb{N}$, there is a $O(\sqrt{n}\log^2(n))$-blowup of a fan that contains every $n$-vertex planar graph.
    \end{theorem}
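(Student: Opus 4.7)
The plan is to proceed by a recursive balanced-separator argument, using the planar product structure theorem to ensure that separators can be chosen so that one axis becomes the fan's spine and the other axis contributes to the fan's centre across all levels of the recursion.

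The starting point is the planar product structure theorem, which gives $G\subseteq H\boxtimes P$ for some graph $H$ with $\tw(H)\leq 8$ and some path $P=p_1p_2\cdots p_m$. Identifying each $v\in V(G)$ with a pair $(\alpha(v),\beta(v))\in V(H)\times V(P)$, edges of $G$ can only connect pairs whose $H$-coordinates are equal or adjacent in $H$ and whose $P$-coordinates differ by at most $1$. I would use the linear order of $V(P)$ as the spine of the fan, so that the $j$-th spine slot contains most of the $G$-vertices with $\beta$-coordinate $p_j$; edges of $G$ that jump between $P$-adjacent positions are then automatically captured by spine adjacency.

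The main work is to ensure that each slot has size at most $\Tilde{O}(\sqrt{n})$. A slot could have up to $|V(H)|$ vertices of $G$, so vertices lying in ``heavy'' columns must be delegated to the centre of the fan. I would use a recursive balanced-separator argument on $H$: fix a tree decomposition $(T,(B_x)_{x\in V(T)})$ of $H$ of width at most $8$, and at each level remove a centroid bag whose at most $9$ corresponding columns dump $O(\sqrt{n})$ vertices into the centre via a secondary separator argument on the $P$-axis. The outer centroid recursion on $T$ gives $O(\log n)$ levels, each contributing $O(\sqrt{n})$ centre vertices, while the inner recursion on $P$ contributes another factor of $\log n$, yielding a total centre size of $O(\sqrt{n}\log^2 n)$.

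The main obstacle I anticipate is the interaction between the two recursion axes: pieces coming from different subtrees of $T$ each want to claim overlapping $P$-intervals of the spine, and the slot sizes must remain bounded by $\Tilde{O}(\sqrt{n})$ after interleaving. I expect this to be handled by a charging argument, assigning each $G$-vertex either to a spine slot (if it lives in a ``light'' column of its current recursion subtree) or to the centre (if it lives in a ``heavy'' column), with the charging justified by planar $O(\sqrt{n})$-separator bounds applied at each level. I expect that making this interleaving argument tight is what forces both logarithmic factors in the final bound.
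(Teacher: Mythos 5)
Your proposal attempts a direct, self-contained proof via the planar product structure theorem, whereas the present paper does not prove \cref{planarNonFlex} at all: it cites it from \citet{Dujmovic2024} and records that it follows in one line from the stronger flexible-fan-partition theorem (\cref{rtwFlex}, also cited from \citet{Dujmovic2024}) by plugging in the row-treewidth bound for planar graphs and choosing $d\approx\sqrt{n}/\log n$. So any direct argument is necessarily a different route from the one this paper takes. The paper's route has the advantage that the hard work (namely \cref{rtwFlex}) is shared with the $K_h$-minor-free case, which is the paper's real goal; your route, if it worked, would be a from-scratch proof specialised to planar graphs.

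However, there is a genuine gap in your sketch, located exactly at the place you flag as ``the main obstacle.'' Using the linear order of $V(P)$ directly as the spine cannot work, even after absorbing separator vertices into the centre. Consider the case $|V(H)|=n^{2/3}$, $|V(P)|=n^{1/3}$, with $G$ occupying essentially all of $H\boxtimes P$. Each column (fixed $P$-coordinate) has about $n^{2/3}$ vertices of $G$; reducing every column to $\widetilde{O}(\sqrt{n})$ would force you to push $\Omega(n - n^{5/6})$ vertices into the centre, which blows the $\widetilde{O}(\sqrt n)$ budget. Your recursion on $T$ removes centroid bags (a few rows) from $H$, which splits $H$ into pieces $H_1,H_2,\dots$, but \emph{all} of those pieces still span the entire $P$-axis. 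If spine slot $j$ keeps collecting all surviving vertices with $\beta$-coordinate $p_j$, the slot sizes barely change across recursion levels, so the recursion as described does not make progress on the width. What would be needed is a spine that is \emph{not} indexed by $V(P)$ alone: after putting the separator rows into the centre the pieces $H_i\boxtimes P$ are pairwise non-adjacent, so one can concatenate a separate path-partition for each piece (as the paper does when it observes that a graph admits a path-partition of width $w$ iff each of its components does, in the proof of \cref{mainLem}). But once the spine is a concatenation of per-piece paths rather than a single copy of $P$, the charging argument you gesture at has to be re-done from scratch, and it is precisely this bookkeeping — balancing the depth of the recursion on $T$ against the widths in the concatenated spine and the total separator mass — that constitutes the real content of \cref{rtwFlex}. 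As written, the proposal identifies the right obstacle but does not contain the idea that overcomes it.

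A smaller point: \cref{planarNonFlex} asserts a single universal fan whose blowup contains \emph{every} $n$-vertex planar graph, whereas your construction produces a fan depending on $G$. This is harmless and repaired by \cref{unvToInd}, but it should be stated.
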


    \citet{Dujmovic2024} asked if this could be extended to proper minor-closed classes. Specifically, is every $K_h$-minor-free graph contained in a $\Tilde{O}_h(\sqrt{n})$-blowup of a fan? We answer this in the positive.
    
    \begin{theorem}
        \label{main}
        For each $h,n\in\mathbb{N}$, there is a $O_h(\sqrt{n}\log^2(n))$-blowup of a fan that contains every $n$-vertex $K_h$-minor-free graph.
    \end{theorem}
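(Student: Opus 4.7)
The plan is to combine a product structure theorem for $K_h$-minor-free graphs with an adaptation of the proof of \cref{planarNonFlex}. The relevant product structure theorem asserts that every $K_h$-minor-free graph $G$ is a subgraph of $(H \boxtimes P) + K_a$, where $H$ has treewidth at most $c_h$, $P$ is a path, $a \leq c_h$, and $K_a$ is a small apex set with vertices universally adjacent to the rest of the graph (here $c_h$ is a constant depending only on $h$). This realizes $G$ as a subgraph of a bounded-treewidth-times-path product together with a small apex clique.

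The apex clique is absorbed into the centre of the final fan: since the centre of a $b$-blowup of a fan provides $b$ universally-adjacent vertices, and $b = \Omega_h(\sqrt{n}\log^2 n) \gg a$ for $n$ large (the small-$n$ regime being trivial and absorbed into the $O_h$), the $a$ apex vertices can be identified with $a$ copies of the centre at no additional cost. It thus suffices to show that any subgraph of $H \boxtimes P$ on $n$ vertices, with $\tw(H) \leq c_h$, is contained in a $O_h(\sqrt{n}\log^2 n)$-blowup of a fan. For this I would closely follow the proof of \cref{planarNonFlex}, which takes the planar product structure $G \subseteq H_0 \boxtimes P_0$ with $\tw(H_0)$ bounded and builds a fan blowup via a recursive balanced-separator decomposition inside $H_0$ (contributing one factor of $\log n$) combined with a layering argument along $P_0$ (contributing the second factor). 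Both components only use treewidth-boundedness of $H_0$ via its constant-sized balanced separators, so the argument should extend from planar-compatible treewidth to treewidth $c_h$ with the hidden constants absorbed into $O_h$.

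The main obstacle is verifying that each step of the Dujmovic et al.~construction generalizes cleanly from the planar-specific product structure to the more general bounded-treewidth product structure while preserving the $\sqrt{n}\log^2 n$ factor; in particular, one must track how the parameter $c_h$ propagates through the recursion. If any step of the planar argument is genuinely specific to $\tw(H_0) \leq 3$ rather than merely bounded treewidth, the fallback is a direct balanced-separator recursion on $G$ itself: take an $O_h(\sqrt{n})$ balanced separator, place it into the fan centre, recurse on the two halves, and concatenate the resulting fans along their path parts. In that variant the two $\log n$ factors would arise from the $O(\log n)$ recursion depth combined with an inner $O(\log n)$ factor from iterating separators inside the subproblems.
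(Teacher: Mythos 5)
Your proposal's key premise is a structural theorem that does not exist: it is \emph{not} true that every $K_h$-minor-free graph is a subgraph of $(H \boxtimes P) + K_a$ with $\tw(H)$ bounded by a function of $h$. That global product structure only holds for apex-minor-free classes (bounded genus, planar-plus-apices, etc.), where layered/row treewidth is bounded. For general $K_h$-minor-free graphs (already at $h=6$, and more emphatically beyond) row treewidth is unbounded, and the correct structural tool is the tree-decomposition version stated in the paper as the Graph Minor Product Structure Theorem (\cref{GMST}): there is a tree-decomposition of adhesion $O_h(1)$ such that each \emph{torso}, after deleting $O_h(1)$ apices, has bounded row treewidth. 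The whole technical content of the paper is about handling this tree-decomposition: cutting the decomposition tree into $O(n/d)$ small pieces via \cref{treeDeletions}, absorbing all adhesion sets of the cut bags into the central part, reducing each piece to a star-decomposition whose central torso has bounded row treewidth and whose leaf bags are small, and then (in \cref{starDecomp}) attaching leaf-bag vertices via auxiliary degree-$1$ pendants together with a power-of-graph trick (\cref{PPpowers}) so that the path-partition survives the gluing. Your sketch, by contrast, jumps directly to \cref{rtwFlex} applied to the whole graph, which only applies when the whole graph has bounded row treewidth.

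Your fallback (direct balanced-separator recursion on $G$, putting separators into the centre) also does not close the gap. Two subfans cannot be concatenated into a fan without merging their centres, and if you merge, a depth-$O(\log n)$ recursion on a graph with $O(\sqrt{m})$-separators contributes $\sum_{i} 2^{i}\cdot O(\sqrt{n/2^{i}}) = O(n)$ vertices to the centre in total, not $\tilde O(\sqrt n)$. Controlling the total centre size is exactly what requires the \emph{flexible} fan-partitions machinery (choose the flexibility parameter $d$ per chunk), the superadditivity of $f$ in \cref{mainLem} (so the centre contributions across chunks sum nicely), and a single application of the row-treewidth partition per torso rather than a global recursion. So neither the main route nor the fallback in your proposal works as stated; you would need to bring in \cref{GMST} and the star-decomposition argument (or an equivalent) to handle the tree structure.
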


    Up to a $\polylog(n)$ factor, this answers a question of \citet{Distel2024}. They asked if all $K_h$-minor-free graphs are contained in a $O_h(\sqrt{n})$-blowup of a graph of bounded pathwidth. Since fans have pathwidth $2$, \cref{main} gives a positive answer, up to a $\polylog(n)$ factor.

\subsection{Definitions and Key Tools}

    Let $\ds{R}^+$ denote the set of all strictly positive real numbers, and let $\ds{N}:=\{1,2,\dots\}$.

    We consider a graph $G$ to be finite, simple, and undirected, with vertex set $V(G)$ and edge set $E(G)$.

    A \defn{rooted tree} is a tree $T$ with some fixed vertex $r$ called the \defn{root}. The \defn{induced root} of a subtree $T'$ of $T$ is the vertex of $T'$ closest to $r$ in $T$.

    For an integer $d\geq 1$, the \defn{$d$-th power} of a graph $G$, \defn{$G^d$}, is the graph with vertex set $V(G)$ and edges between two vertices if they are at distance at most $d$ in $G$.

    A \defn{class} of graphs is a collection of graphs closed under isomorphism. A class is \defn{monotone} if it is closed under taking subgraphs. Given a class of graphs $\scr{G}$, we use \defn{$\widehat{\scr{G}}$} to denote the class of graphs $G$ such that there exists a set $Z\subseteq V(G)$ of degree 1 vertices in $G$ such that $G-Z\in \scr{G}$. For a class of graphs $\scr{G}$ and integer $a\geq 0$, we use \defn{$\Apex{\scr{G}}{a}$} to denote the set of graphs $G$ such that there exists $X\subseteq V(G)$ with $|X|\leq a$ such that $G-X\in \scr{G}$. We call $X$ the \defn{apices} of $G$. Observe that if $\scr{G}$ is closed under adding isolated vertices, then $\widehat{\Apex{\scr{G}}{a}}\subseteq \Apex{\widehat{\scr{G}}}{a}$.

    A \defn{star} is a tree with at least one vertex, called the \defn{centre}, that is adjacent to every other vertex. Note that if a star contains at least three vertices, then the centre is unique.

    A graph $H$ is a \defn{minor} of a graph $G$, denoted \defn{$H\leq G$}, if a graph isomorphic to $H$ can be obtained from a subgraph of $G$ by performing any number of contractions; otherwise, $G$ is \defn{$H$-minor-free}. A class of graphs $\scr{G}$ is \defn{$H$-minor-free} if every graph in $\scr{G}$ is $H$-minor-free. $\scr{G}$ is \defn{minor-closed} if, for every $G\in\scr{G}$, every minor of $G$ is in $\scr{G}$. $\scr{G}$ is \defn{proper} if it is not the class of all graphs. Note that if $\scr{G}$ is a proper minor-closed class, then $\scr{G}$ is $K_h$-minor-free for some $h\in \ds{N}$.

    A function $f:X\mapsto Y$ with $X,Y\subseteq \ds{R}^+$ is \defn{increasing} if for all $n,m\in X$ with $n<m$, $f(n)<f(m)$, and is \defn{superadditive} if $f(n)+f(m)\leq f(n+m)$ for all $n,m\in X$. Note that if a superadditive function has strictly positive outputs ($Y\subseteq \ds{R}^+$) then it is also increasing. For a constant $c\in \ds{R}^+$, we use the notation \defn{$cf$} to denote the function $n\mapsto cf(n)$, the notation \defn{$f/c$} to denote the function $n\mapsto f(n)/c$, and the notation \defn{$f+c$} to denote the function $n\mapsto f(n)+c$.

    A \defn{partition} $\scr{P}$ of a graph $G$ is a collection of pairwise disjoint subsets of $V(G)$ such that $\bigcup_{P\in \scr{P}} P=V(G)$. The subsets $P\in \scr{P}$ are called the \defn{parts} of $\scr{P}$. Note that parts may be empty. The \defn{width} of $\scr{P}$ is $\max_{P\in \scr{P}}|P|$. The \defn{quotient} of $\scr{P}$ in $G$, denoted \defn{$G/\scr{P}$}, is the graph with vertex set $\scr{P}$ and edges between distinct parts if the subgraphs of $G$ they induce are adjacent in $G$. For a graph $H$, we say that $\scr{P}$ is a \defn{$H$-partition} if $G/\scr{P}$ is contained in $H$. In particular, if $\scr{P}$ is an $F$-partition for a fan $F$, then we say that $\scr{P}$ is a \defn{fan-partition}, and if $\scr{P}$ is an $P$-partition for a path $P$, then we say that $\scr{P}$ is a \defn{path-partition}. For a fan-partition $\scr{P}$ of a graph $G$, we call the part $P$ that corresponds to the centre $v$ of the fan $F$ the \defn{central part}. Note that we may assume that the central part always exists, by taking it to be empty if need be.

    For an integer $w \geq 1$, a graph $G$ admits a $H$-partition of width at most $w$ if and only if it is contained in the $w$-blowup of $H$. We work with this as the definition from now on.

    \cref{main,planarNonFlex} are stated in terms of a universal fan whose blowup contains every $n$-vertex graph in the corresponding class. However, the following observation allows us to reduce to finding a fan-partition for each graph in the class individually.
    \begin{observation}
        \label{unvToInd}
        For any $n,w\in \ds{N}$ and any class of $n$-vertex graphs $\scr{G}$, there exists a fan $F$ such that each $G\in \scr{G}$ is contained in the $w$-blowup of $F$ if and only if for each $G\in \scr{G}$, there exists a fan $F_G$ such that $G$ is contained in the $w$-blowup of $F_G$.
    \end{observation}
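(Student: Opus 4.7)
The forward direction is immediate: if a universal fan $F$ with the stated property exists, simply take $F_G := F$ for every $G \in \scr{G}$.

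For the backward direction, the plan is to produce one universal fan $F^{*}$ that works for all of $\scr{G}$. The key observation is that fans are nested under the subgraph relation: the fan on $k$ vertices is a subgraph of the fan on $k'$ vertices for every $k' \geq k$, since one can identify the centres and embed the shorter path as an initial segment of the longer one. Consequently, it suffices to show that each $G \in \scr{G}$ is contained in the $w$-blowup of \emph{some} fan on at most $n+1$ vertices, after which taking $F^{*}$ to be the fan on $n+1$ vertices completes the proof.

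To obtain this uniform size bound, take the $F_G$-partition $\scr{P}$ of $G$ given by the hypothesis and restrict attention to its non-empty parts; these correspond to a set $S \subseteq V(F_G)$ with $|S| \leq n$. Construct a new fan $F'$ on $|S|+1$ vertices with centre $c'$ and path $u_1,\dots,u_{|S|}$, and define an injection $\phi \colon S \to V(F')$ as follows. If the centre of $F_G$ lies in $S$, send it to $c'$ and send the path vertices of $F_G$ that lie in $S$ in order to an initial segment of $u_1,\dots,u_{|S|}$; otherwise, send $S$ entirely onto an initial segment of that path. Transporting $\scr{P}$ through $\phi$ (and filling in empty parts at the unused vertices of $F'$) should yield an $F'$-partition of $G$ of width at most $w$, exhibiting $G$ in the $w$-blowup of a fan on at most $n+1$ vertices as required.

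The only real verification, and the main (minor) obstacle, is that $\phi$ preserves adjacency between parts, so that the transported partition is a valid $F'$-partition. Any edge of the quotient $G/\scr{P}$ arises from two adjacent vertices of $F_G$ in $S$, which are either the centre paired with a path vertex or two consecutive path vertices; consecutive path vertices in $S$ are adjacent in the induced order on $S$ and hence are sent by $\phi$ to two consecutive vertices of the path of $F'$. A short case analysis on whether the centre of $F_G$ belongs to $S$, together with the nesting property of fans described above, then finishes the proof.
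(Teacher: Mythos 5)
Your proof is correct and takes a genuinely different route from the paper's. The paper also disposes of the forward direction immediately, but for the backward direction it argues by \emph{contraction}: repeatedly contract two adjacent parts of the $F_G$-partition whenever both have size at most $w/2$, so that afterwards at least half the parts have size at least $w/2$; since $\sum_P |P| = n$ this caps the number of parts at roughly $4n/w$, and since the class of subgraphs of fans is minor-closed the quotient is still a subgraph of some fan, now on at most $4n/w$ vertices. Your argument instead simply discards the empty parts, injects the at most $n$ non-empty parts into a fan on $n+1$ vertices via an order-preserving map on the path, and invokes the nesting of fans. Both yield the desired universal fan. Your approach is more elementary (no appeal to minor-closedness of fan-subgraphs, and no bookkeeping about which parts to contract), at the cost of a cruder size bound on the universal fan: $n+1$ rather than $O(n/w)$. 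For \cref{unvToInd} itself the size of the universal fan is irrelevant, so the trade-off is immaterial here, but the paper's sharper bound reflects the natural bound $|V(F)| \in O(n/w)$ that one would want if the observation were quantified more finely. One small presentational remark: when you argue that $\phi$ preserves the relevant adjacencies, the key point worth stating explicitly is that two path vertices of $F_G$ which are adjacent in $F_G$ and both lie in $S$ are automatically \emph{consecutive in the induced order on $S$}, since no vertex (and hence no vertex of $S$) lies between them; this is what makes the order-preserving embedding into an initial segment of $F'$'s path work.
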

    \begin{proof}
        The forward direction is immediate. For the backward direction, by contracting adjacent parts in $F_G$ whenever they both have size at most $w/2$, we can assume that at least half the parts have size at least $w/2$. Since the class of subgraphs of fans is minor-closed, the resulting partition is still a $F_G'$-partition for some fan $F_G'$ with at most $4n/w$ vertices. Since every fan on $m$ vertices is contained in every fan on $m'\geq m$ vertices, it suffices to take $F$ to be a fan on $\floor{4n/w}$ vertices.
    \end{proof}
    
    Additionally, a graph $G$ admits a fan-partition of width at most $w$ if and only if there exists $X\subseteq V(G)$ with $|X|\leq w$ such that $G-X$ admits a path-partition of width at most $w$. We now expand on this point.

    Let $\scr{P}$ be a partition of a graph $G$. For real numbers $k,w\geq 0$, we say that $\scr{P}$ is a \defn{$(k,w)$-fan-partition} of $G$ if there exists $P\in \scr{P}$ with $|P|\leq k$ such that $\scr{P}\setminus \{P\}$ is a path-partition of $G-P$ of width at most $w$. Note that $\scr{P}$ is a fan-partition, and that $\scr{P}$ has width at most $\max(k,w)$. We say that $G$ admits \defn{$(k,w)$-flexible fan-partitions} if for all $d\in \ds{R}^+$ with $d\geq 1$, $G$ admits a $(k/d,wd)$-fan-partition. Note that provided $w\geq 1$, the existence of a $(k/d,wd)$-fan-partition is trivial when $d>|V(G)|$. So it suffices to consider $d\in \ds{R}^+$ with $1\leq d\leq |V(G)|$.
    
    Given functions $f,g:\ds{N}\mapsto \ds{R}$, we say that a class $\scr{G}$ admits \defn{$(f,g)$-fan-partitions} if for each $G\in \scr{G}$, $G$ admits a $(f(|V(G)|),g(|V(G)|))$-fan-partition. We say that $\scr{G}$ admits \defn{$(f,g)$-flexible fan-partitions} if for every $d\in \ds{R}^+$ with $d\geq 1$, $\scr{G}$ admits $(f/d,dg)$-fan-partitions. Note that by the above observation, provided $g(n)\geq 1$ for all $n\in \ds{N}$, it suffices to check $d\in \ds{R}^+$ with $1\leq d\leq |V(G)|$ for each graph $G$.

    A \defn{tree-decomposition} of a graph $G$ is a collection $(B_t\subseteq V(G):t\in V(T))$ with $T$ a tree such that (a) for each $v\in V(G)$, the subgraph of $T$ induced by the vertices $t\in V(T)$ with $v\in B_t$ is a nonempty subtree; and
    (b) for each $uv\in E(G)$, there exists $t\in V(T)$ such that $u,v\in B_t$. The sets $B_t$, $t\in V(T)$, are called \defn{bags}. If $T$ is a star $S$, then $(B_s:s\in V(S))$ is a \defn{star-decomposition} of $G$. If $T$ is a path $P$, then $(B_p:p\in V(P))$ is a \defn{path-decomposition}. The \defn{width} of a tree-decomposition is $\max_{t\in V(T)}(|B_t|-1)$. The \defn{adhesion} of a tree-decomposition is $\max_{tt'\in E(T)}|B_t\cap B_{t'}|$. The \defn{torso} of $G$ at a vertex $t\in V(T)$ (with respect to $(B_t:t\in V(T))$), denoted \defn{$\UWT{G}{B_t}$}, is the graph obtained from $G[B_t]$ by adding an edge $uv$ whenever there exists $tt'\in E(T)$ such that $u,v\in B_t\cap B_{t'}$, provided $uv$ does not already exist in $G[B_t]$.

    The \defn{treewidth} of a graph $G$, \defn{$\tw(G)$}, is the minimum width of a tree-decomposition of $G$. The \defn{pathwidth} of $G$ is the minimum width of a path-decomposition of $G$.

    The \defn{row treewidth} of a graph $G$, \defn{$\rtw(G)$}, is the minimum $b$ such that there exists a graph $H$ with $\tw(H)\leq b$ and a path $P$ such that $G$ is contained in $H \boxtimes P$. Note that row treewidth does not increase under taking subgraphs or adding isolated vertices. The \defn{row treewidth} of a class $\scr{G}$, \defn{$\rtw(\scr{G})$}, is the maximum row treewidth of a graph in $\scr{G}$, or $\infty$ if the maximum does not exist.

    \citet{Dujmovic2024} actually showed a stronger version of \cref{planarNonFlex} in terms of flexible-fan-partitions and row treewidth.

    \begin{theorem}[\citet{Dujmovic2024}, Theorem~49]
        \label{rtwFlex}
        Every $n$-vertex graph $G$ with row treewidth at most $b$ admits $(f,g)$-flexible fan-partitions for some $f\in O(bn\log(n))$ and $g\in O(\log^3(n))$.
    \end{theorem}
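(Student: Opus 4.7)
The plan is to prove \cref{rtwFlex} by a nested balanced-separator recursion exploiting the product structure $G \subseteq H \boxtimes P$ with $\tw(H) \leq b$. Fix a flexibility parameter $d \geq 1$; the goal is to construct an apex $X \subseteq V(G)$ with $|X| \leq O(bn\log(n)/d)$ together with a path-partition of $G - X$ of width $\leq O(d\log^3(n))$.

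I would fix a rooted tree-decomposition $(B_t)_{t\in V(T)}$ of $H$ of width $b$ and apply heavy-path decomposition to $T$. Let $Q$ be the heavy path containing the root; then $V_Q := \bigcup_{t\in Q}B_t$ separates $H$ into pieces $H_1,\ldots,H_k$ each with at most $|V(H)|/2$ vertices, while the bags along $Q$ form a path-decomposition of $H[V_Q]$ of width $\leq b$. The outer recursion handles each $G_i := G \cap (V(H_i) \times V(P))$, which has row treewidth $\leq b$, by induction. The spine subgraph $G_Q := G \cap (V_Q \times V(P))$ is contained in $H[V_Q] \boxtimes P$ with $\pw(H[V_Q]) \leq b$; I would treat it by an inner sub-routine that applies balanced-separator recursion along the linear bag-order of $H[V_Q]$'s path-decomposition, combined with balanced separators on $P$. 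Parameterizing by $d$, this yields an apex $X_Q$ and a path-partition of $G_Q - X_Q$ with $|X_Q| = O(bn_Q\log(n)/d)$ and width $O(d\log^2(n))$, where $n_Q := |V(G_Q)|$.

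To combine, set $X := X_Q \cup \bigcup_i X_i$ where each $X_i$ comes from the recursive call on $G_i$. The path-partition of $G - X$ is obtained by aligning all sub-path-partitions along the common path $V(P)$: at index $p$, the combined part is the union of the $G_Q - X_Q$ part at $p$ with the $G_i - X_i$ parts at $p$. Since $V_Q$ separates the $H_i$ from each other in $H$, this is a valid path-partition of $G - X$. The outer recursion has depth $O(\log n)$ (the heavy-path depth), and each $G$-vertex lies in spines at $O(\log n)$ outer levels, so the spine apex contributions and widths sum to $|X| = O(bn\log(n)/d)$ and path-partition width $O(\log n) \cdot O(d\log^2(n)) = O(d\log^3(n))$.

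The hard part will be the inner sub-routine for $G_Q$: a spine can contain up to $\Omega(n)$ vertices of $G$ (for instance when $V(P)$ is long and every column meets $V_Q$), so it cannot be dumped into the apex or handled as a single bag. One must leverage its row pathwidth structure via a second balanced-separator layer on both $H[V_Q]$'s path-decomposition and $P$, and tune the resulting apex--width trade-off to match $d$. Producing a single globally consistent path-partition across the full nested recursion, and verifying that the asymptotic bounds carry through, is the technical crux of the proof.
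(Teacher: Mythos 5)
The paper does not prove this statement; \cref{rtwFlex} is quoted verbatim as Theorem~49 of \citet{Dujmovic2024} and is used purely as a black box, so there is no in-paper proof to compare your sketch against.

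Judged on its own terms, the sketch has a concrete gap at the combination step. You propose to ``align'' all recursively produced path-partitions along the indices $p\in V(P)$ and take unions at each index $p$. But a path-partition of small width cannot in general be synchronised with $V(P)$: the canonical $P$-indexed partition of $G_i\subseteq H_i\boxtimes P$ has parts that are whole columns $V(H_i)\times\{p\}$, of size up to $|V(H_i)|$, so to get small width you must refine the columns, after which the parts no longer line up with $V(P)$ and there is no meaningful ``part at index $p$'' to union across pieces. Even granting some alignment, the union at each $p$ has size $\sum_i |P_{i,p}|$, which the individual widths do not control when many $G_i$ are nonempty at the same index. The alternative of concatenating the path-partitions of $G_1,\dots,G_k$ fails for the opposite reason: the spine $G_Q-X_Q$ is adjacent to every $G_i$, so its parts would need to lie within distance one of parts inside each $G_i$, which a concatenation cannot accommodate. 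Resolving this tension---interleaving a nontrivial spine partition with the partitions of all the mutually separated pieces it touches---is the actual crux of the theorem and is not supplied by the inner sub-routine you flag as the ``hard part''. There is also an accounting slip: with each vertex on $O(\log n)$ spines and each spine $Q$ contributing $O(bn_Q\log(n)/d)$ apices as you state, the total is $O(bn\log^2(n)/d)$, one $\log$ factor too many; you would need per-spine apices of only $O(bn_Q/d)$, or a global charging argument, to hit $O(bn\log(n)/d)$. Finally, heavy-path decomposition balances $|V(T)|$, not $|V(H)|$; to get $|V(H_i)|\leq |V(H)|/2$ you would need to weight $T$ by, say, the number of vertices first introduced in each bag.
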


    We remark that Theorem~49 of \citet{Dujmovic2024} only shows the $1\leq d\leq |V(G)|$ case, however the $d>|V(G)|$ follows trivially by previous observations since we may assume $f(n)\geq 1$ for all $n\in \ds{N}$.
    
    Note that \cref{planarNonFlex} follows directly from \cref{rtwFlex} by taking $d:=n/\log(n)$ in the definition of flexible fan-partitions, since planar graphs have row treewidth at most $8$ \citep{rtwltwMCC}, improved to $6$ by \citet{Ueckerdt2022}.

    \cref{rtwFlex} will be one of our key tools for proving \cref{main}. The other main tool is the `Graph Minor Product Structure Theorem' of \citet{rtwltwMCC}, which gives a structural description of $K_h$-minor free graphs.

    \begin{theorem}[Graph Minor Product Structure Theorem]
        \label{GMST}
        For every integer $h\geq 2$, there exists integers $b,k\geq 1$ such that every $K_h$-minor-free graph $G$ admits a tree-decomposition $(B_t:t\in V(T))$ of adhesion at most $k$ such that for each $t\in V(T)$, there exists $X_t\subseteq B_t$ with $|X_t|\leq \max(h-5,0)$ such that $\UWT{G}{B_t}-X_t$ has row treewidth at most $b$.
    \end{theorem}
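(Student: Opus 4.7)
The plan is to combine the classical graph minor structure theorem with product structure results for almost-embeddable graphs. First, I would invoke the Robertson–Seymour structure theorem (in the refined form of Kawarabayashi, Thomas, and Wollan that gives the tight apex bound) to obtain a tree-decomposition $(B_t:t\in V(T))$ of $G$ of adhesion at most some $k=k(h)$ such that each torso $\UWT{G}{B_t}$ is \emph{almost-embeddable}: there is a set $X_t\subseteq B_t$ with $|X_t|\leq \max(h-5,0)$ (the apex vertices) such that $\UWT{G}{B_t}-X_t$ is obtained from a graph embedded in a surface of Euler genus at most $g=g(h)$ by attaching at most $g$ vortices of width at most $p=p(h)$ along pairwise disjoint faces. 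This step already supplies both the adhesion bound $k$ and the apex bound $\max(h-5,0)$ directly.

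Second, I would upgrade the almost-embedding of each apex-free torso to a bound on row treewidth by a constant $b=b(g,p)$ depending only on $h$. The starting point is the planar product structure theorem: every planar graph is contained in $H\boxtimes P$ for some graph $H$ with $\tw(H)\leq 3$ and some path $P$, hence has row treewidth at most $3$. This generalises to surfaces of Euler genus $g$ at the cost of an extra factor of $K_{O(g)}$, which adds $O(g)$ to the row treewidth. To accommodate vortices, I would use the fact that a vortex of width at most $p$ attached along a cycle $C$ admits a path-decomposition of width $O(p)$ whose bag-sequence is indexed by $C$; this path-decomposition can be threaded into the product structure of the embedded base by concatenating it with the path factor associated to the face in which the vortex is attached. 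Carefully merging these two product structures along their common attachment cycle yields a single product decomposition of $\UWT{G}{B_t}-X_t$ of the form $H_t\boxtimes P_t$ with $\tw(H_t)\leq b(g,p)$.

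Combining the two steps, the set $X_t$ and the uniform bound $b$ required by the statement are exactly those produced by the almost-embedding structure. The main obstacle, and the technical heart of the argument in \citet{rtwltwMCC}, is the second step: integrating bounded-width vortices into the product structure of the surface-embedded part without the width of the resulting product blowing up with the depth or the number of vortices. The first step is essentially black-box, modulo using the refined version of the structure theorem needed to bring the apex bound down to $\max(h-5,0)$ rather than a larger function of $h$.
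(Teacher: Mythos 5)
The paper does not prove \cref{GMST}; it is invoked as a black box from \citet{rtwltwMCC}, with only the remark that the proof employs the Graph Minor Structure Theorem of \citet{Robertson2003}. Your sketch correctly outlines the two-step strategy of that cited proof: apply the (refined) Graph Minor Structure Theorem to obtain a tree-decomposition of bounded adhesion in which every torso is almost-embeddable with an apex set of size at most $\max(h-5,0)$, and then show that the apex-free part of each torso has row treewidth bounded by a function of $h$ alone via product structure. The second step does proceed roughly as you describe — start from the planar product structure theorem, pay a $K_{\Theta(g)}$ factor to move to Euler genus $g$, and thread the bounded-width path-decompositions of vortices into the path factor along their attachment faces — and you are right that integrating the vortices without blowing up the width of the $H$-factor is the technical heart of the argument. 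Two small caveats: the planar row-treewidth bound used in \citet{rtwltwMCC} is $8$ (later lowered to $6$ by \citet{Ueckerdt2022}), not $3$, though any absolute constant suffices here; and the $\max(h-5,0)$ apex bound is a standard consequence of excluding $K_h$ (an apex set of size $h-4$ over a large surface grid already forces a $K_h$ minor) that is already present in the structure-theorem literature — your attribution to a specific refinement is plausible but not load-bearing, since the present paper only needs some bound depending on $h$.
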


    We remark that the Graph Minor Product Structure Theorem employs the `Graph Minor Structure Theorem' of \citet{Robertson2003}.
    
\section{Proofs}
    Before we get into the proof of \cref{main}, we give an overview of the idea. We recall that, by \cref{unvToInd}, to prove the existence of a fan-blowup containing every $n$-vertex graph $G$, it suffices only to show, for each graph $G$, the existence of a fan-blowup containing $G$.
    
    The proof consists of two steps. The first is `splitting up the tree-decomposition', and the second is `processing the smaller sections'. Step 1 is primarily handled by \cref{mainLem}, Step 2 by \cref{starDecomp}, and all other results assist in one of these two steps. We now explain the ideas behind these steps.

    Starting with a $K_h$-minor-free graph $G$, let $b,k$ be from \cref{GMST}, let $\scr{H}$ be the class of all graphs with row treewidth at most $b$, and let $a:=\max(h-5,0)$. By \cref{GMST}, there is a tree-decomposition $(B_t:t\in V(T))$ of $G$ of adhesion at most $k$ such that each torso is in $\Apex{\scr{H}}{a}$. Our goal is to find a small number of vertices whose removal breaks the graph, and the tree-decomposition, into smaller chunks. To do this, we use the same method as \citet{Distel2024}. We can find a small set $Z\subseteq V(T)$ whose deletion splits $T$ into subtrees $T'$ such that the total number of vertices contained in $\bigcup_{t\in V(T')}B_t$ is small; see \cref{treeDeletions} and \cref{mainLem}. 
    
    We then consider $T$ to be rooted at a vertex $r$. For each $z\in Z\setminus \{r\}$, delete the at most $k$ vertices in $B_z$ that are in the bag of $z$'s parent. The total number of vertices deleted is small, but it has the effect of breaking the graph and tree-decomposition into manageable chunks. Specifically, each chunk has a star-decomposition $(B'_s:s\in V(S))$ of adhesion at most $k$ where, if $r'$ is the centre of $S$, then the torso at $r'$ is in $\Apex{\scr{H}}{a}$, and each bag $B'_s$, $s\neq r'$, is small.
    
    We then process these chunks separately, finding flexible-fan-partitions for each. Using this flexibility, we can guarantee that the total number of vertices in central parts across all chunks is still small, while maintaining that the width of the corresponding path-partition of each chunk is also small, with respect to the size of the total graph. We then merge all the central parts into one single part, and add all the vertices of $G$ that we deleted into this part. This part will be our new central part, and will still have small size. Noting that a graph admits a path-partition of width $w$ if and only if each connected component does, this yields the desired fan-partition.

    We now sketch Step 2, processing the smaller chunks. As the central torso is in $\Apex{\scr{H}}{a}$, the majority of the work is done by \cref{rtwFlex}, as we can just put the apices in the central part; see \cref{addApex}. However, we still need to attach the vertices in the smaller bags $B'_s$, $s\neq r$. For this, we create an auxiliary graph $G'$, obtained from the central torso by attaching degree 1 vertices. Specifically, for each vertex $v$ in some bag $B'_s$, $s\neq r$, and each vertex $x$ in $B'_s\cap B'_r$, we add an auxiliary vertex $v_x$ adjacent to $x$. The resulting graph is in $\widehat{\Apex{\scr{H}}{a}}$. However, $\widehat{\Apex{\scr{H}}{a}}=\Apex{\scr{H}}{a}$ as $\widehat{\scr{H}}=\scr{H}$ (see \cref{rtwExt}) and since $\scr{H}$ is closed under isolated vertices. Thus, we can still process $G'$ using \cref{rtwFlex,addApex}. We obtain a fan-partition of $G'$. Call the central part $X$.
    
    We find that for each $B'_s$, $s\neq r$, one of two things must happen. One possibility is that $B'_s\cap B'_r\subseteq X$, in which case $B'_s$ can exist as an isolated part in the path-partition. The second possibility is that there exists some $x\in (B'_s\cap B'_r)\setminus X$. In this case, for each vertex $v\in B'_s\setminus B'_r$, one of two outcomes must have occurred. Either $v_x\in X$, in which case $v$ is added to our central part, or $v_x$ is at distance at most $2$ in $G'-X$ of every vertex in $(B'_s\cap B'_r)\setminus X$ and every vertex $v'_x$ with $v'\in B'_s\setminus B'_r$ and $v'_x\notin X$. In particular, these vertices become adjacent in $(G'-X)^2$. As one can still find a path-partition of small width even after taking a power of the graph, see \cref{PPpowers}, we can convert our path-partition to be a path-partition of $(G'-X)^2$. Hence, all these vertices belong to adjacent parts. Thus, converting this partition naturally to a partition of $G$, we find that this new partition is a path-partition, as all possible adjacencies were already handled by $(G'-X)^2$.

    We now start the formal proof of \cref{main}.

    \begin{lemma}
        \label{rtwExt}
        For every class of graphs $\scr{G}$, $\rtw(\widehat{\scr{G}})\leq \max(\rtw(\scr{G}),1)$.
    \end{lemma}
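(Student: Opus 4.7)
The plan is to take an arbitrary $G \in \widehat{\scr{G}}$, fix a set $Z$ of degree-$1$ vertices of $G$ with $G_0 := G - Z \in \scr{G}$, and convert an embedding of $G_0$ into a strong product into one of $G$. We may assume $b := \rtw(\scr{G}) < \infty$, as the claim is vacuous otherwise. Fix an injection $\phi\colon V(G_0) \hookrightarrow V(H) \times V(P)$ realising $G_0 \subseteq H \boxtimes P$ with $\tw(H) \leq b$, and for $w \in V(G_0)$ write $\phi(w) = (h_w, p_w)$. Any connected component of $G$ lying entirely inside $Z$ is a single edge (since every vertex of such a component has degree $1$), and the disjoint union of such components has row treewidth at most $1$; since row treewidth is preserved under disjoint union, it suffices to treat the case where every $z \in Z$ has its unique neighbour in $V(G_0)$.

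The first step is to construct a new graph $H'$ with room for the vertices of $Z$. For each $h \in V(H)$, let $D_h$ be the maximum, over all $w \in V(G_0)$ with $h_w = h$, of the number of $Z$-pendants of $w$ in $G$ (taking $D_h := 0$ if no such $w$ exists), and obtain $H'$ from $H$ by attaching $D_h$ fresh pendants to each $h \in V(H)$. A width-$b$ tree-decomposition of $H$ extends to a tree-decomposition of $H'$ by appending a size-$2$ bag for each new pendant edge, so $\tw(H') \leq \max(\tw(H), 1) \leq \max(b, 1)$.

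Next, extend $\phi$ to $\phi'\colon V(G) \hookrightarrow V(H') \times V(P)$ by keeping each $v \in V(G_0)$ in place and placing the pendants of each $w \in V(G_0)$ in the column $p_w$: enumerate the $Z$-pendants of $w$ as $z_1, \dots, z_{d_w}$ (with $d_w \leq D_{h_w}$) and the new pendants of $h_w$ in $H'$ as $h_w^{(1)}, \dots, h_w^{(D_{h_w})}$, and set $\phi'(z_j) := (h_w^{(j)}, p_w)$. Each edge $w z_j$ of $G$ then maps to a pair sharing the column $p_w$, whose first coordinates $h_w, h_w^{(j)}$ are adjacent in $H'$, so adjacencies are preserved and hence $G \subseteq H' \boxtimes P$, giving $\rtw(G) \leq \tw(H') \leq \max(b, 1)$.

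The main thing requiring real verification is injectivity of $\phi'$. Pendants attached to different vertices of $V(H)$ are automatically disjoint in $H'$, so the only danger is a collision between pendants of $w$ and $w'$ with $h_w = h_{w'}$; but injectivity of $\phi$ forces $p_w \neq p_{w'}$ in that case, so the shared pendant rows land in different columns of $P$ and no collision occurs. (It is precisely this reuse of pendants across columns that motivates defining $D_h$ as a maximum rather than a sum over the relevant $w$'s, which keeps $\tw(H')$ from blowing up.) Collisions with vertices of $V(G_0)$ are impossible because pendant rows lie in $V(H') \setminus V(H)$. Everything else is routine.
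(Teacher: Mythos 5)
Your proof is correct and takes essentially the same approach as the paper: add pendant vertices to $H$ to accommodate the vertices of $Z$, keeping each $z$ in the same $P$-column as its unique neighbour, and observe that attaching pendants raises treewidth to at most $\max(\tw(H),1)$. The one real difference is a minor economy: the paper simply adds one fresh pendant $z$ to $h$ for each $z\in Z$ whose neighbour maps to row $h$, whereas you add only $D_h=\max_w d_w$ pendants per row and reuse them across columns, with the requisite injectivity check; this shrinks $H'$ but buys nothing for the treewidth bound, so the extra bookkeeping is unnecessary here (though harmless). You also explicitly dispose of components lying entirely in $Z$ (isolated $K_2$'s), an edge case the paper's phrasing ``each edge in $E_z$ is of the form $(h,p)z$'' quietly assumes away; that is a nice bit of extra care, and worth having.
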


    \begin{proof}
        Let $b:=\max(\rtw(\scr{G}),1)$. Fix $G\in \widehat{\scr{G}}$. So there exists a set $Z$ of degree 1 vertices in $G$ such that $G-Z\in \scr{G}$. Thus, there exists a graph $H$ of treewidth at most $b$ and a path $P$ such that $G-Z$ is contained in $H\boxtimes P$. Let $E_z$ be the set of edges of $G$ with an endpoint in $Z$. Note that we can consider each edge in $E_z$ to be of the form $(h,p)z$ with $h\in V(H)$, $p\in V(P)$, $z\in Z$.
        
        Let $H'$ be the graph with vertex set $V(H)\cup Z$ and edge set $E(H)\cup \{hz:(h,p)z\in E_z\}$. Observe that $H'$ is obtained from $H$ by adding some number of degree 1 vertices. Thus, $\tw(H')\leq \max(\tw(H),1)\leq b$. Next, notice that for each $(h,p)z\in E_z$, we have $hz\in E(H')$ and thus $(h,p)(z,p)\in E(H'\boxtimes P)$. Therefore, we find that $G$ is isomorphic to a subgraph of $H'\boxtimes P$, via the mapping $(h,p)\mapsto (h,p)$ for $v=(h,p)\in V(G-Z)$, and $z\mapsto (z,p)$ for the unique $(h,p)\in V(G-Z)$ such that $(h,p)z\in E_z$. This gives the desired result.
    \end{proof}

    \begin{lemma}
        \label{addApex}
        Let $f,g:\ds{N}\mapsto \ds{R}^+$ be increasing functions, let $a\in \ds{N}\cup \{0\}$, and let $\scr{G}$ be a class of graphs that admit $(f,g)$-fan-partitions. Then $\Apex{\scr{G}}{a}$ admits $(f+a,g)$-fan-partitions.
    \end{lemma}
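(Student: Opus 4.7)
The plan is to simply absorb the apex set into the central part of the fan-partition produced for $\scr{G}$, and then check that the monotonicity assumptions on $f$ and $g$ guarantee the required bounds.

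More precisely, fix $G \in \Apex{\scr{G}}{a}$ and let $n := |V(G)|$. By definition there is some $X \subseteq V(G)$ with $|X| \leq a$ such that $G - X \in \scr{G}$. Let $n' := |V(G - X)| = n - |X|$. Since $\scr{G}$ admits $(f,g)$-fan-partitions, we obtain a partition $\scr{P}'$ of $G - X$ together with a designated part $P_0 \in \scr{P}'$ such that $|P_0| \leq f(n')$ and $\scr{P}' \setminus \{P_0\}$ is a path-partition of $(G - X) - P_0$ of width at most $g(n')$.

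Now define the partition $\scr{P}$ of $G$ by replacing $P_0$ with $P_0 \cup X$ and keeping every other part of $\scr{P}'$ unchanged. I claim that $\scr{P}$ is a $(f(n) + a, g(n))$-fan-partition of $G$ witnessed by the central part $P_0 \cup X$. For the size of the central part, since $f$ is increasing and $n' \leq n$ and $|X| \leq a$, we have
\[
|P_0 \cup X| \leq |P_0| + |X| \leq f(n') + a \leq f(n) + a.
\]
For the remaining parts, observe that $G - (P_0 \cup X) = (G - X) - P_0$, so $\scr{P} \setminus \{P_0 \cup X\} = \scr{P}' \setminus \{P_0\}$ is a path-partition of $G - (P_0 \cup X)$; its width is at most $g(n') \leq g(n)$ because $g$ is increasing. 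This establishes the claim and completes the proof.

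There is no real obstacle here: the argument is just bookkeeping, and the only place monotonicity is used is to replace $f(n')$ and $g(n')$ by $f(n)$ and $g(n)$ in the final bounds, which is exactly why the hypothesis that $f,g$ are increasing appears in the statement.
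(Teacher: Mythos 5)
Your proof is correct and takes essentially the same approach as the paper: absorb the at most $a$ apices into the central part of the $(f,g)$-fan-partition of $G-X$, and use monotonicity of $f$ and $g$ to pass from $|V(G-X)|$ to $|V(G)|$ in the bounds. Nothing further is needed.
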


    \begin{proof}
        Fix $G\in \Apex{\scr{G}}{a}$, and let $n:=|V(G)|$. Let $X_1$ be the set of at most $a$ apices of $G$. So $G-X_1\in \scr{G}$. Let $\scr{P}$ be a $(f,g)$-fan-partition of $G-X_1$ with central part $X_2$. So $|X_2|\leq f(|V(G-X_1)|)\leq f(n)$, and $\scr{P}\setminus \{X_2\}$ is a path-partition of $G-X_1-X_2$ of width at most $g(|V(G-X_1)|)\leq g(n)$. Here, we use that $f$ and $g$ are increasing. 
        
        Let $X':=X_1\cup X_2$, note that $|X'|\leq f(n)+a$. It follows that $(\scr{P}\setminus \{X_2\})\cup \{X'\}$ is the desired fan-partition of $G$.
    \end{proof}

    \begin{lemma}
        \label{PPpowers}
        For each $d\in \ds{N}$ and every $w\in \ds{R}^+$, if a graph $G$ admits a path-partition of width at most $w$, then $G^d$ admits a path-partition of width at most $wd$.
    \end{lemma}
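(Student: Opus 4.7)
The plan is to coarsen the given path-partition of $G$ by merging consecutive blocks of $d$ parts into single new parts. Concretely, write the given path-partition of $G$ as $(P_1,P_2,\dots,P_m)$, indexed so that edges of $G$ only occur within some $P_i$ or between $P_i$ and $P_{i+1}$, and $|P_i|\leq w$ for all $i$. I would then define
\[
    Q_j := \bigcup_{i=(j-1)d+1}^{\min(jd,\,m)} P_i \qquad \text{for } j=1,2,\dots,\ceil{m/d}.
\]
Each $Q_j$ is a union of at most $d$ parts of size at most $w$, so $|Q_j|\leq wd$, matching the desired width bound.

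The remaining task is to verify that $\scr{Q}:=(Q_1,Q_2,\dots,Q_{\ceil{m/d}})$ is a path-partition of $G^d$, i.e. that every edge of $G^d$ either lives inside some $Q_j$ or joins two consecutive $Q_j$'s. Suppose $uv\in E(G^d)$ with $u\in P_i$ and $v\in P_{i'}$. By definition of $G^d$, there is a walk of length at most $d$ from $u$ to $v$ in $G$. Since each edge of such a walk either stays inside a single part of $\scr{P}$ or joins two parts whose indices differ by exactly $1$, a straightforward induction on the length of the walk gives $|i-i'|\leq d$.

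It remains to translate this into a bound on the block indices $j,j'$ with $u\in Q_j,v\in Q_{j'}$. Writing $j=\ceil{i/d}$ and $j'=\ceil{i'/d}$ (with the convention that $P_i$ with $i>m$ is empty), the inequality $|i-i'|\leq d$ yields $|j-j'|\leq 1$ by a routine computation with the ceiling function: if $i\leq i'$, then $j'\leq \ceil{(i+d)/d}=\ceil{i/d}+1=j+1$. Hence every edge of $G^d$ stays within a single $Q_j$ or crosses between $Q_j$ and $Q_{j+1}$, so $\scr{Q}$ is a path-partition of $G^d$ of width at most $wd$, as required.

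There is no serious obstacle here; the only thing to be careful about is the indexing and the inductive argument that a walk of length $\ell$ in $G$ shifts the part index by at most $\ell$, which is an immediate consequence of the definition of a path-partition.
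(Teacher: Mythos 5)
Your proof is correct and takes essentially the same approach as the paper: group consecutive blocks of $d$ parts of the given path-partition into new parts, observe the width bound $wd$ is immediate, and argue that the index shift induced by an edge of $G^d$ is at most $d$, hence at most one block. The only cosmetic difference is that the paper indexes blocks from $0$ via $\bigcup_{j=1}^d P_{id+j}$ and pads with empty parts outside $\{1,\dots,m\}$, while you index from $1$ and use $\lceil i/d\rceil$; the substance is identical.
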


    \begin{proof}
        Let $\scr{P}=\{P_1,P_2,\dots,P_m\}$ be the path-partition of $G$, with the parts ordered by their position on the path. Set $P_i:=\emptyset$ for $i>m$ and $i<1$. Let $\scr{P}':=\{\bigcup_{j=1}^d P_{id+j}:i\in \{0,\dots,\ceil{\frac{m}{d}}-1\}\}$. Observe that for each $k\in \{1,\dots,m\}$, $k\in \{id+1,\dots,(i+1)d\}$ for exactly one $i\in \{0,\dots,\ceil{\frac{m}{d}}-1\}$. It follows that $\scr{P}'$ is also a partition of $G$ (and $G^d$). As each $P'\in \scr{P}'$ is a union of at most $d$ parts in $\scr{P}$, it is immediate that $\scr{P}'$ has width at most $wd$. So it remains only to show that $\scr{P}'$ is a path-partition of $G^d$.

        As $\scr{P}$ is a path-partition, it follows that for each $vw\in E(G^d)$, if $v\in P_k$, then $w\in P_j$ with $j\in \{k-d,\dots,k+d\}$. Thus, if $v\in P_i'$, then $k\in \{id+1,\dots,(i+1)d\}$, and $j\in \{(i-1)d+1,\dots,(i+2)d\}$. But $\bigcup_{\ell=(i-1)d+1}^{(i+2)d}P_{\ell}\subseteq P_{i-1}'\cup P_i'\cup P_{i+1}'$, so $w\in P_{i-1}'\cup P_i'\cup P_{i+1}'$. It follows that $\scr{P}'$ is a path-partition for $G^d$, as desired.
    \end{proof}

    We also use the following technical lemma, see \citet[Lemma~9]{Distel2024}, for a proof.

    \begin{lemma}
        \label{treeDeletions}
        For every integer $q\geq 0$ and $n\in \ds{R}^+$, every vertex-weighted tree $T$ with total weight at most $n$ has a set $Z$ of at most $q$ vertices such that each component of $T-Z$ has weight at most $\frac{n}{q+1}$. 
    \end{lemma}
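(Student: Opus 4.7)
The plan is to proceed by induction on $q$. The base case $q = 0$ is immediate, since $Z = \emptyset$ trivially satisfies the conclusion: the total weight of $T$ is already at most $n = n/(q+1)$.

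For the inductive step, root $T$ at an arbitrary vertex $r$ and, for each vertex $v$, let $T_v$ denote the rooted subtree at $v$, with total weight $w(T_v)$. If $w(T) \leq n/(q+1)$ then again $Z = \emptyset$ suffices, so assume $w(T) > n/(q+1)$. Choose $v$ to be a \emph{deepest} vertex with $w(T_v) > n/(q+1)$. By the choice of $v$, every child subtree of $v$ has weight at most $n/(q+1)$. Let $T' := T - V(T_v)$, which is a tree (possibly empty). Then
\[
w(T') \leq n - w(T_v) < n - \frac{n}{q+1} = \frac{nq}{q+1}.
\]
Applying the inductive hypothesis to $T'$ with parameter $q-1$ and total-weight bound $nq/(q+1)$ yields a set $Z' \subseteq V(T')$ with $|Z'| \leq q-1$ such that every component of $T' - Z'$ has weight at most $(nq/(q+1))/q = n/(q+1)$. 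Setting $Z := Z' \cup \{v\}$ gives $|Z| \leq q$, and every component of $T - Z$ is either a component of $T' - Z'$ or a child subtree of $v$; in either case its weight is at most $n/(q+1)$, as required.

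The only real subtlety is choosing the vertex to delete at each step. Taking $v$ to be a deepest subtree whose weight still exceeds $n/(q+1)$ simultaneously bounds the child subtrees (which automatically come in at or below the threshold, so they need no further deletions) and the complement $T'$ (whose weight is small enough that one fewer deletion suffices by induction). This is a targeted variant of the standard centroid argument, tuned to give the sharp $n/(q+1)$ bound rather than the weaker geometric-series bound that naive balanced bisection would produce.
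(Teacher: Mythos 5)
Your proof is correct. The paper itself gives no proof here (it cites Lemma~9 of \citet{Distel2024}), but your argument is the standard one for this statement: root the tree, greedily delete the deepest vertex whose rooted subtree exceeds the threshold $n/(q+1)$, observe that this drops the remaining weight below $nq/(q+1)$, and recurse with budget $q-1$. The accounting is sound — the child subtrees of the chosen vertex automatically land at or below $n/(q+1)$, the complement $T'$ is handled by the inductive hypothesis, and the two collections of components are exactly the components of $T-Z$ once $v$ is removed. This is the same inductive ``deepest heavy subtree'' argument used in \citet{Distel2024}, so there is nothing to reconcile.
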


    We now prove our first main lemma (Step 2 in the above sketch). As a guide to the reader, we will apply \cref{starDecomp,mainLem} with $f(n)\in O(n\log(n))$ and $g(n)\in O(\log^3(n))$.

    \begin{lemma}
        \label{starDecomp}
        Let $k,n\in \ds{N}$, let $w\in \ds{R}^+$, let $f,g:\ds{N}\mapsto \ds{R}^+$ be increasing functions, and let $\scr{H}$ be a class of graphs such that $\widehat{\scr{H}}$ admits $(f,g)$-fan-partitions. Let $G$ be an $n$-vertex graph that admits a star-decomposition $(B_s:s\in V(S))$ of adhesion at most $k$ such that, if $r$ is the centre of $S$, then:
        \begin{itemize}
            \item $\UWT{G}{B_r}\in \scr{H}$, and
            \item for each $s\in V(S)\setminus \{r\}$, $|B_s\setminus B_r|\leq w$.
        \end{itemize}
        Then there exists $X\subseteq V(G)$ with $|X|\leq f(kn)$ such that $G-X$ admits a path-partition of width at most $\max(2g(kn),w)$.
    \end{lemma}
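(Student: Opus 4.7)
The plan is to reduce to finding a fan-partition of an auxiliary graph $G' \in \widehat{\scr{H}}$, then transport that partition back to $G$. I would build $G'$ by starting from the central torso $\UWT{G}{B_r} \in \scr{H}$ and, for every leaf $s \neq r$, every $v \in B_s \setminus B_r$, and every $x \in B_s \cap B_r$, attaching a fresh degree-$1$ vertex $v_x$ adjacent to $x$; here $v_x$ acts as a proxy for $v$ communicating through the adhesion vertex $x$. Since the adhesion is at most $k$ and $\sum_{s\neq r}|B_s \setminus B_r| \leq n$, we have $|V(G')| \leq kn$, and $G' \in \widehat{\scr{H}}$ by construction.

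Applying the hypothesis to $G'$ yields an $(f, g)$-fan-partition with central part $X$ of size at most $f(kn)$ and a path-partition $\scr{P}'$ of $G' - X$ of width at most $g(kn)$. Using \cref{PPpowers} with $d = 2$, this lifts to a path-partition $\scr{P}''$ of $(G' - X)^2$ of width at most $2g(kn)$. For each leaf $s$ I would distinguish two cases. In Case~1, when $(B_s \cap B_r) \setminus X = \emptyset$, append $B_s \setminus B_r$ as a standalone part (size $\leq w$); this is safe since every neighbor of $B_s \setminus B_r$ in $B_r$ lies in $X$. In Case~2, fix any $x_s \in (B_s \cap B_r) \setminus X$, and for each $v \in B_s \setminus B_r$ either place $v$ in the central set (if $v_{x_s} \in X$) or insert $v$ into the unique part of $\scr{P}''$ that contains $v_{x_s}$. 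Define $X' := (X \cap V(G)) \cup \{v : v \text{ is placed centrally in Case 2}\}$; since each centrally-placed $v$ biject with a distinct auxiliary vertex $v_{x_s} \in X$, we get $|X'| \leq |X| \leq f(kn)$.

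The main obstacle is verifying that the resulting collection really is a path-partition of $G - X'$ of width at most $\max(2g(kn), w)$. For width, each part of $\scr{P}''$ retains its real vertices in $B_r \setminus X$, loses all auxiliary vertices, and gains at most one real $v$ per discarded $v_{x_s}$, so the part size cannot exceed its original value $2g(kn)$; Case~1 parts are bounded by $w$. For correctness, the crucial observation is that in Case~2 the hub $x_s \in B_r \setminus X$ makes every pair among the set $\{v_{x_s} : v \in B_s \setminus B_r,\, v_{x_s} \notin X\} \cup ((B_s \cap B_r) \setminus X)$ lie at distance at most $2$ in $G' - X$, using that the torso makes all members of $B_s \cap B_r$ pairwise adjacent. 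Hence these vertices are adjacent in $(G' - X)^2$ and therefore occupy the same or consecutive parts of $\scr{P}''$, which accounts for every edge of $G - X'$ incident to $B_s \setminus B_r$. Edges inside $B_r$ already lie in $G' - X$ since the torso contains $G[B_r]$, and no edges exist between distinct pairs $B_s \setminus B_r$ and $B_{s'} \setminus B_r$ by the subtree property of the tree-decomposition.
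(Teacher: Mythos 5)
Your proposal is correct and takes essentially the same approach as the paper: construct the auxiliary graph $G'$ with degree-$1$ proxy vertices $v_x$, apply the hypothesis to $G'$, square the resulting path-partition via \cref{PPpowers}, and then transfer it back to $G$ using the representative $x_s$ as a hub for distance-$2$ adjacencies. The only cosmetic difference is that the paper removes $v$ whenever \emph{any} $v_x$ (not just $v_{x_s}$) lies in $X$, but both variants yield the same bounds and the same correctness argument.
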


    \begin{proof}
        For each $s\in V(S)\setminus \{r\}$, let $K_s:=B_s\cap B_r$ and $B_s':=B_s\setminus B_r$. Note that $|K_s|\leq k$ and $|B_s'|\leq w$. Let $G'$ be the graph obtained from $\UWT{G}{B_r}$ by adding, for each $s\in V(S)\setminus \{r\}$, each $v\in B_s'$, and each $x\in K_s$, a new vertex $v_x$ adjacent only to $x$. See \cref{FigDeg1}. Observe that $G'\in \widehat{\scr{H}}$, and that $|V(G')|\leq |B_r|+k|V(G)\setminus B_r|\leq k|V(G)|=kn$. For each $s\in V(S)\setminus \{r\}$ and each $v\in B_s'$, let $M_v:=\{v_x:x\in K_s\}$.
        \begin{figure}[!ht]
        \begin{center}
        \begin{minipage}{0.85\textwidth}
        \centering
        \includegraphics[width=\textwidth]{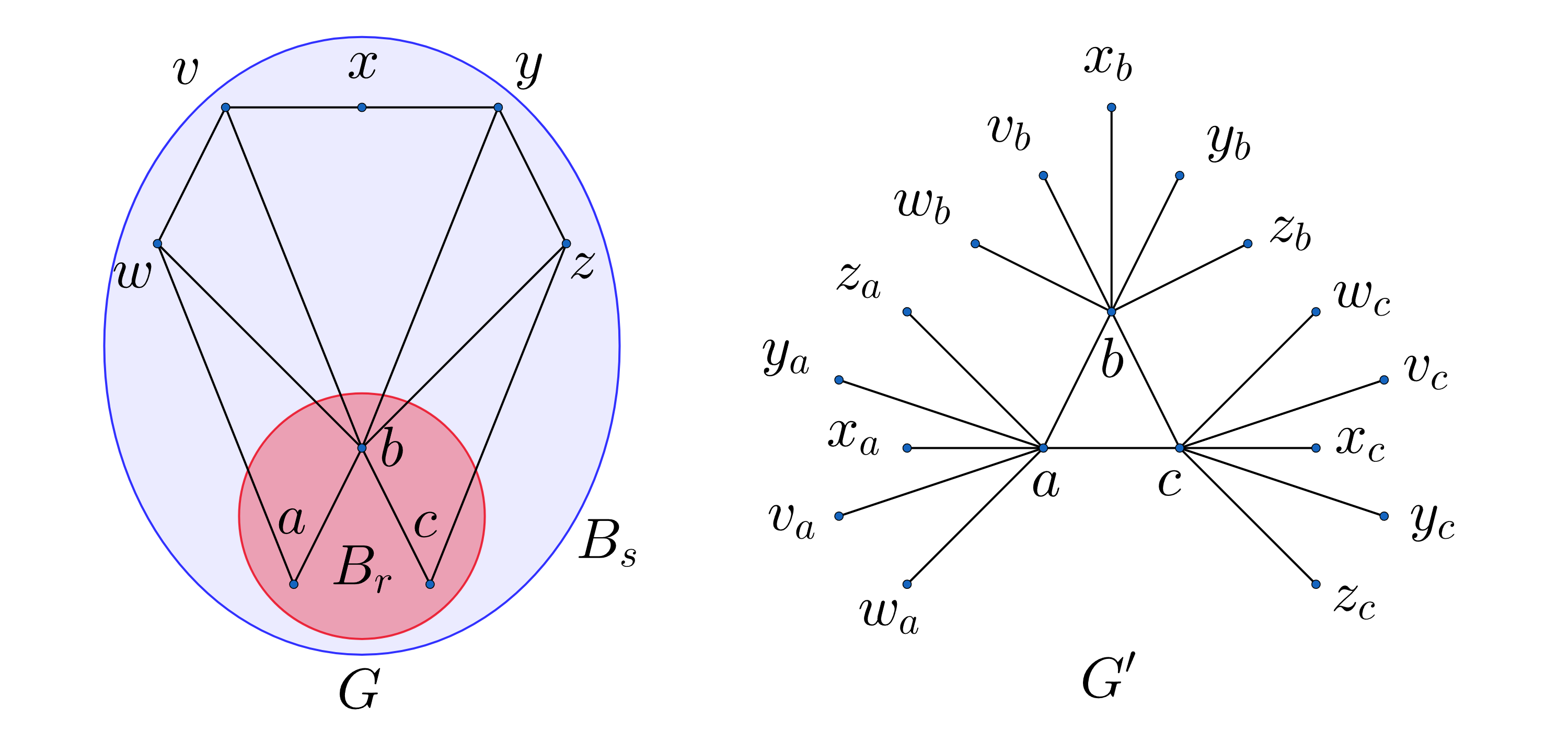}
        \caption{\label{FigDeg1}A diagram of how $G$ is turned into $G'$. The left graph is $G$, the right graph is $G'$. On the left, the vertices in the red section are $B_r$, where $r$ is the centre. The vertices in the blue section (all vertices) are $B_s$. Note that, on the right, each vertex in $B_s\setminus B_r=B_s'$ has been replaced by three new vertices, each adjacent to a different vertex of $B_r=\{a,b,c\}$, and that $B_r$ has become a clique. In this case, $M_v=\{v_a,v_b,v_c\}$ (and similar for $M_w,M_x,M_y,M_z$).}
        \end{minipage}
        \end{center}
        \end{figure}
        
        By definition of $\scr{H}$, there exists a set $X\subseteq V(G')$ with $|X|\leq f(|V(G')|)\leq f(kn)$ such that $G'-X$ admits a path-partition of width at most $g(|V(G')|)\leq g(kn)$. Here, we use that $f,g$ are increasing. By \cref{PPpowers}, $(G'-X)^2$ admits a path-partition $\scr{P}$ of width at most $2g(kn)$. Let $X'_1:=X\cap B_r$, let $X'_2:=\{v\in V(G)\setminus B_r:M_v\cap X\neq \emptyset\}$, and let $X':=X'_1\cup X'_2$. Observe that $|X'_2|\leq |X\setminus B_r|$, thus $|X'|\leq |X|\leq f(kn)$. Also, note that $X'\cap B_r=X\cap B_r=X'_1$.
        
        Let $U:=\{s\in V(S)\setminus \{r\}:K_s\setminus X=\emptyset\}$ and $W:=\{s\in V(S)\setminus \{r\}:K_s\setminus X\neq \emptyset\}$. Let $V_U:=(\bigcup_{s\in U}B_s')\setminus X'$ and $V_W:=(\bigcup_{s\in W}B_s')\setminus X'$. For each $s\in U$, define $P_s:=B_s'\setminus X'$. Note that $|P_s|\leq |B_s'|\leq w$. For each $s\in W$, pick some $x\in K_s\setminus X$, set $x_s:=x$ and, for each $v\in B_s'\setminus X'$, set $a_v:=v_x$. Note that as $v\notin X'\supseteq X'_2$, $M_v\cap X=\emptyset$. Hence, $a_v\in M_v$ is not in $X$.
        
        For $P\in \scr{P}$, define $P':=(P\cap B_r)\cup \{v\in V_W:a_v\in P\}$. Note that $|\{v\in V_W:a_v\in P\}|\leq |P\setminus B_r|$, thus $|P'|\leq |P|\leq 2g(kn)$. Let $\scr{P}':=\{P':P\in \scr{P}\}\cup \{P_s:s\in U\}$. We claim that $\scr{P}'$ is the desired path-partition of $G-X'$. We first show that it is a partition.
        
        Observe that $\{B_r\setminus X',V_U,V_W\}$ is a partition of $G-X'$. Also, note that $\{P_s:s\in U\}$ is a partition of $G[V_U]$, and that $\{P'\cap B_r:P\in \scr{P}\}=\{P\cap B_r:P\in \scr{P}\}$ is a partition for $G[B_r\setminus X]=G[B_r\setminus X']$. Next, for each $v\in V_W$, since $v\notin X'$, $a_v\notin X$. Thus, $a_v\in P$ for exactly one $P\in \scr{P}$, and $v\in P'$ for exactly one $P'\in \{Q':Q\in \scr{P}\}$. It follows that $\scr{P}'$ is a partition of $G$. 
        
        As $|P'|\leq 2g(kn)$ for each $P\in \scr{P}$ and $|P_s|\leq w$ for each $s\in U$, it is immediate that $\scr{P}'$ has width at most $\max(2g(kn),w)$. It remains only to show that $\scr{P'}$ is a path-partition.

        Let $H:=(G-X')/\scr{P}'$. Observe that for each $s\in U$, $P_s$ is an isolated vertex in $H$ as $K_s\subseteq X'$. So we only need to show that the subgraph of $H$ induced by $\{P':P\in \scr{P}\}$ is a subgraph of a path. 
        
        As $\scr{P}$ is a path-partition of $(G'-X)^2$, it suffices to show that distinct parts $P_1',P_2'$ are adjacent in $G-X'$ only if $P_1,P_2$ are adjacent in $(G'-X)^2$. Let $u\in P_1'$, $v\in P_2'$ be adjacent in $G$. Note that either $u\in P_1\cap B_r$, or $u\in V_W$ and $a_u\in P_1$. Likewise, either $v\in P_2\cap B_r$, or $v\in V_W$ and $a_v\in P_2$. Also, note that $u\notin X'$ and $v\notin X'$. 
        
        As $uv\in E(G)$, there exists $s\in V(S)$ such that $u,v\in B_s$. If $s=r$ then $u\in P_1$, $v\in P_2$, and $P_1,P_2$ are adjacent, as desired. So we may assume that $s\neq r$, and at least one of $u,v$ is not in $B_r$. 
        
        Recall that, for each $s\in W$, $x_s$ is a vertex in $K_s\setminus X$, and that for each $z\in B_s'$, $a_z$ is adjacent to $x_s$ in $G'$. Also, recall that for each $z\in B_s'\setminus X'$, $a_z\notin X$. Thus, if $u,v\in B_s'$, since $u\notin X'$ and $v\notin X'$, we have $a_u,a_v$ are at distance at most $2$ in $G'-X$ via the common neighbour $x_s$. As $a_u\in P_1$ and $a_v\in P_2$, we obtain that $P_1,P_2$ are adjacent in $(G'-X)^2$, as desired.

        The remaining case is if exactly one of $u,v$ is in $B_r$, w.l.o.g. say $u$. Thus, $u\in K_s$. Since $X'\cap B_r=X\cap B_r$ and since $u\notin X'$, we have $u\notin X$. Thus, $u$ is adjacent or equal to $x_s$ in $G'-X$ as $K_s$ is a clique in $G'[B_r]=\UWT{G}{B_r}$. By the same argument as before, $x_s$ is also adjacent to $a_v$. Thus, $x\in P_1$ is at distance at most 2 to $a_v\in P_2$ in $G'-X$. So $P_1$ and $P_2$ are adjacent in $(G'-X)^2$, as desired.
    \end{proof}
    
    \begin{lemma}
        \label{mainLem}
        Let $h\geq 2$, and let $k,b\geq 1$ be from \cref{GMST}. Let $f,g:\ds{N}\mapsto \ds{R}^+$ be functions with $f$ superadditive, $g$ increasing, and with $g(1)\geq 1$, such that the class of graphs of row treewidth at most $b$ admit $(f,g)$-flexible fan-partitions. Then the class of $K_h$-minor-free graphs admits $(f',g')$-flexible fan-partitions, where $f'(n):=f(kn)+(2\max(h-5,0)+k)n$ and $g'(n):=2g(kn)$.
    \end{lemma}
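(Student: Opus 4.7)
The plan is to combine the tree-decomposition from \cref{GMST} with the tree-splitting idea of \cref{treeDeletions} and the chunk-processing lemma \cref{starDecomp}, carrying the flexibility parameter through the argument. Fix a $K_h$-minor-free graph $G$ on $n$ vertices, let $a := \max(h-5,0)$, and let $\scr{H}$ denote the class of graphs of row treewidth at most $b$. It suffices to exhibit, for each $d \in \ds{R}^+$ with $1 \leq d \leq n$, an $(f'(n)/d,\,d\,g'(n))$-fan-partition of $G$.

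First I would apply \cref{GMST} to obtain a tree-decomposition $(B_t : t \in V(T))$ of $G$ of adhesion at most $k$, each torso $\UWT{G}{B_t}$ in $\Apex{\scr{H}}{a}$. Root $T$ at an arbitrary $r$, and for each $t$ let $p_t$ denote its parent (with $B_{p_r} := \emptyset$); weight each $t$ by $|B_t \setminus B_{p_t}|$ so the total weight is $n$. Apply \cref{treeDeletions} with $q := \floor{n/d}$ to obtain $Z \subseteq V(T)$ of size at most $q$ such that each component of $T - Z$ has weight at most $n/(q+1) < d$. Let $Z^{+} := Z \cup \{r\}$, and for each $z \in Z^{+}$ let $C_z$ consist of those $t \in V(T)$ whose deepest $Z^{+}$-ancestor is $z$; the sets $C_z$ are subtrees of $T$ rooted at $z$ and partition $V(T)$. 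Let $D := \bigcup_{z \in Z \setminus \{r\}}(B_z \cap B_{p_z})$, so $|D| \leq k|Z|$, and for each $z \in Z^{+}$ define the chunk $G_z := G[(\bigcup_{t \in C_z} B_t) \setminus D]$.

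A direct check using the subtree property of bags confirms that $\{V(G_z)\}_{z \in Z^{+}}$ partitions $V(G) \setminus D$: if a vertex lies in bags of both $C_z$ and $C_{z'}$ for $z \neq z'$, the $T$-path connecting those bags crosses from some $C_w$ into an adjacent $C_{w'}$ (with $w'$ the child in the $Z^{+}$-tree), forcing the vertex into $B_{w'} \cap B_{p_{w'}} \subseteq D$. For each chunk $G_z$, build a star-decomposition with centre $z$ (bag $B_z \setminus D$) and one leaf for every non-$Z$ child $c$ of $z$ in $T$, with leaf bag $L_c := (\bigcup_{t \in C_z \cap T'_c} B_t) \setminus D$, where $T'_c$ is the subtree of $T$ rooted at $c$. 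A similar subtree argument gives $B_z \cap L_c \subseteq B_z \cap B_c$, so the adhesion is at most $k$; the centre torso is a subgraph of $\UWT{G}{B_z} - D \in \Apex{\scr{H}}{a}$; and $|L_c \setminus B_z|$ is at most the weight of the $T - Z$ component rooted at $c$, hence strictly less than $d$.

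To finish, note that flexibility of $\scr{H}$ gives $(f/d,\,d\,g)$-fan-partitions of $\scr{H}$, \cref{addApex} upgrades this to $(f/d + a,\,d\,g)$-fan-partitions of $\Apex{\scr{H}}{a}$, and \cref{rtwExt} (using $b \geq 1$) together with closure of $\scr{H}$ under adding isolated vertices yields $\widehat{\Apex{\scr{H}}{a}} \subseteq \Apex{\widehat{\scr{H}}}{a} \subseteq \Apex{\scr{H}}{a}$, so $\widehat{\Apex{\scr{H}}{a}}$ inherits the same fan-partitions. Apply \cref{starDecomp} to each chunk $G_z$ (with $n_z := |V(G_z)|$) and $w := n/(q+1)$: one obtains $X_z \subseteq V(G_z)$ with $|X_z| \leq f(kn_z)/d + a$ and a path-partition of $G_z - X_z$ of width at most $\max(2d\,g(kn_z),\,w) \leq 2d\,g(kn) = d\,g'(n)$, using $g(kn) \geq g(1) \geq 1$. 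Setting $X := D \cup \bigcup_z X_z$, using superadditivity of $f$ to get $\sum_z f(kn_z) \leq f(kn)$, and using $|Z^{+}| \leq q+1 \leq n/d + 1$, gives
\[
|X| \;\leq\; kq + f(kn)/d + a(q+1) \;\leq\; f(kn)/d + (a+k)n/d + a \;\leq\; f'(n)/d,
\]
where the last inequality uses $d \leq n$ to absorb the additive $a$. Concatenating the path-partitions of the (disjoint) chunks produces a path-partition of $G - X$ of width at most $d\,g'(n)$. The main obstacle is the chunk-disjointness verification, which demands careful bookkeeping of the rooted $Z^{+}$-tree structure; once that is in place, the choice $q = \floor{n/d}$ is exactly what balances $|D|$, the number of chunks, and the leaf-bag width simultaneously against the target bounds.
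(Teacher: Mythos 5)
Your proof follows essentially the same approach as the paper's: apply \cref{GMST} to get the tree-decomposition, root it, weight each node by $|B_t\setminus B_{p_t}|$, use \cref{treeDeletions} to find a small separating set $Z$ in the tree, cut off the parent-adhesion sets of the $Z$-nodes to split $G$ into chunks, build star-decompositions of the chunks, invoke \cref{rtwExt}, \cref{addApex}, and \cref{starDecomp} to process each chunk, and finally collect $D$ together with all the chunk apex sets into a single central part. The only differences are cosmetic bookkeeping: you pick $q=\lfloor n/d\rfloor$ where the paper uses $\lceil n/d\rceil -1$, you subtract $D$ from the chunks up front where the paper subtracts $K_t$ bag-by-bag (the resulting $G_z$'s differ slightly but both decompositions are valid), and you feed $w=n/(q+1)$ into \cref{starDecomp} where the paper uses $w=d$; the final arithmetic bound $|X|\leq f'(n)/d$ still goes through in both versions.
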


    \begin{proof}
        Note that because $f$ is superadditive and has strictly positive outputs, it is also increasing.
    
        Let $\scr{G}$ denote the class of graphs of row treewidth at most $b$, and let $a:=\max(h-5,0)$. By \cref{rtwExt}, $\widehat{\scr{G}}=\scr{G}$. Since $\scr{G}$ is closed under adding isolated vertices, $\widehat{\Apex{\scr{G}}{a}}\subseteq \Apex{\widehat{\scr{G}}}{a}=\Apex{\scr{G}}{a}$. Thus, $\widehat{\Apex{\scr{G}}{a}}=\Apex{\scr{G}}{a}$. 
        
        By definition of $f$ and $g$, for any $d\in \ds{R}^+$ with $d\geq 1$, $\scr{G}$ admits $(f/d,gd)$-fan-partitions. Thus, by \cref{addApex}, $\Apex{\scr{G}}{a}$ admits $(f/d+a,gd)$-fan-partitions. Note also that $\Apex{\scr{G}}{a}$ is monotone as $\scr{G}$ is monotone.
    
        Let $G$ be a $K_h$-minor-free graph, set $n:=|V(G))|$, and fix $d\in \ds{R}^+$ with $d\geq 1$. Observe that $2d\,g(kn)\geq d\,g(kn)\geq d\,g(1)\geq d$ since $g$ is increasing and $g(1)\geq 1$. Thus, we may assume $1\leq d\leq n$, as otherwise $G$ trivially admits a $(n/d,d)$-fan-partition.
        
        Let $(B_t:t\in V(T))$ be a tree-decomposition of $G$ produced by \cref{GMST}. Thus, $(B_t:t\in V(T))$ has adhesion at most $k$, and each torso is in $\Apex{\scr{G}}{a}$. Fix a root $r\in V(T)$. For each $t\in V(T)\setminus \{r\}$, let $p(t)$ denote the parent of $t$, and let $K_t:=B_t\cap B_{p(t)}$. Note that $|K_t|\leq k$. Set $K_r:=\emptyset$.
        
        Define a weighting $w$ of $T$ via $w_t:=|B_t\setminus K_t|$ for each $t\in V(T)$. Thus, $\sum_{t\in V(T)}w_t=n$. By \cref{treeDeletions}, there is a set $Z$ with $|Z|\leq \ceil{n/d}-1\leq n/d$ such that each connected component of $T-Z$ has weight (under $w$) at most $d$. Let $X:=\bigcup_{z\in Z}K_z$. Note that $|X|\leq k|Z|\leq kn/d$. Let $Z':=Z\cup \{r\}$. Observe that $|Z'|\leq |Z|+1\leq n/d + 1\leq 2n/d$ as $d\leq n$.

        Let $F$ be the forest obtained from $T$ by, for each $z\in Z'\setminus \{r\}$, deleting the edge $p(z)z$. See \cref{FigBreakup}. Consider any connected component $T'$ of $F$. Note that that $T'$ is a subtree of $T$, the induced root of $T'$ is a vertex $z\in Z'$ (since $r\in Z'$), and that $V(T')\cap Z'=\{z\}$. For each $z\in Z'$, let $T_z$ be the component of $F$ containing $z$. Let $G_z:=G[\bigcup_{t\in V(T_z)}B_t\setminus K_t]$, and let $n_z:=|V(G_z)|$. Observe that $V(T)=\bigcup_{z\in Z'}V(T_z)$, and that $V(T_z)$ is disjoint from $V(T_{z'})$ whenever $z,z'\in Z'$ are distinct. Thus, $\bigcup_{z\in Z'}V(G_z)=V(G)$, and $V(G_z)$ is disjoint from $V(G_{z'})$ whenever $z,z'\in Z'$ are distinct. Therefore, $\sum_{z\in Z'}n_z=n$.
        \begin{figure}[!ht]
        \begin{center}
        \begin{minipage}{0.85\textwidth}
        \centering
        \includegraphics[width=\textwidth]{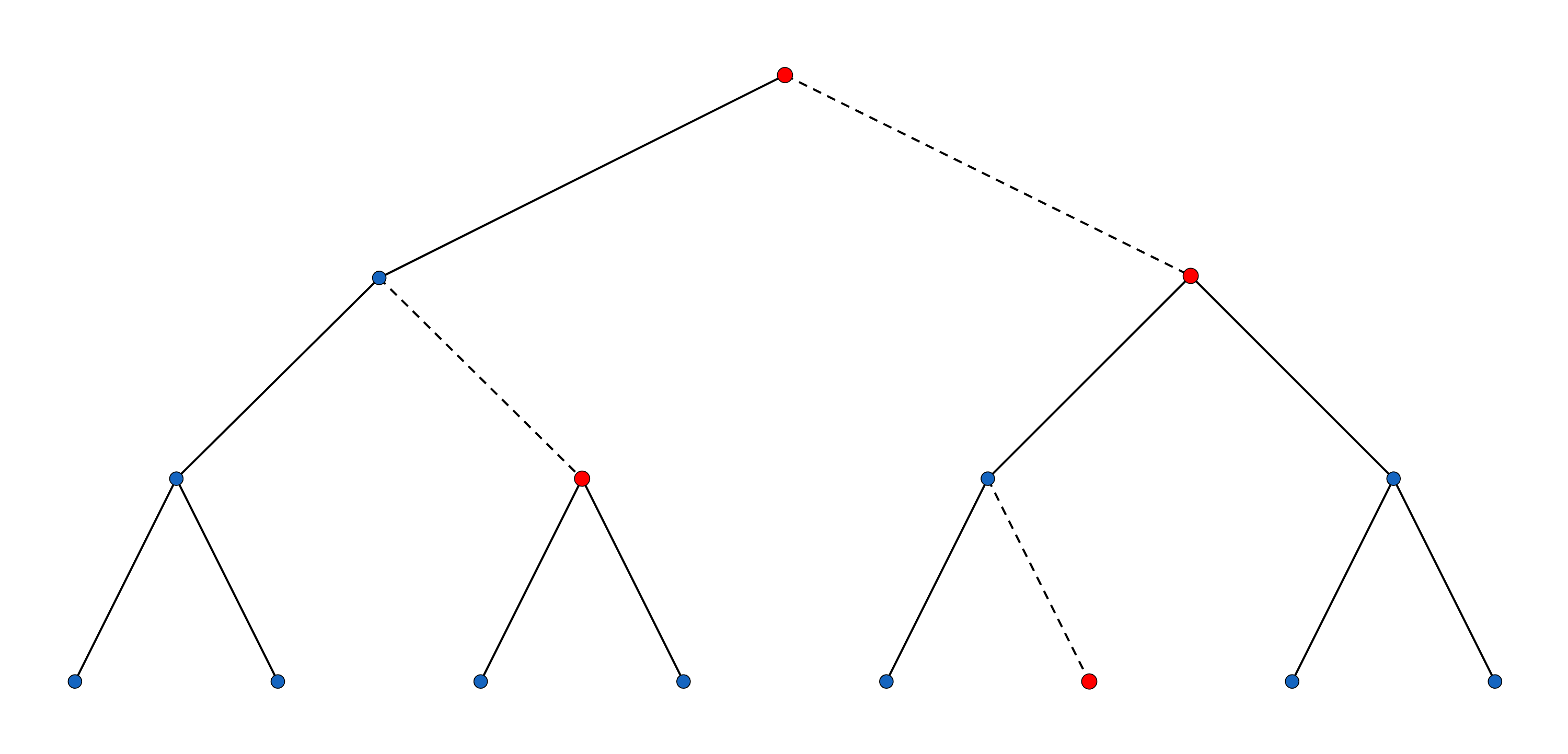}
        \caption{\label{FigBreakup}A diagram of the tree $T$, with the vertices in $Z'$ coloured red and the edges removed to make $F$ represented as dashed lines.}
        \end{minipage}
        \end{center}
        \end{figure}

        As noted above, $\widehat{\Apex{\scr{G}}{a}}=\Apex{\scr{G}}{a}$ admits $(f/d+a,dg)$-fan-partitions. We seek to apply \cref{starDecomp} by showing that, for each $z\in Z'$, $G_z$ has a star-decomposition $(B'_s:s\in V(S))$ of adhesion at most $k$ for some star $S$ with centre $z$ such that $\UWT{G_z}{B'_z}\in \Apex{\scr{G}}{a}$ and $|B'_s\setminus B'_z|\leq d$ for each $s\in V(S)\setminus \{z\}$.

        For each $z\in Z'$, let $\scr{T}_z$ denote the set of connected components of $T_z-z$. Let $S$ be the star with centre $z$ and leaves $\scr{T}_z$. Let $B'_z:=B_z\setminus K_z$, and for each $Q\in \scr{T}_z$, let $B'_Q:=\bigcup_{t\in V(Q)}B_t \setminus K_z$. It follows that $(B'_s:s\in V(S))$ is a star-decomposition of $G_z$ of adhesion at most $k$. By definition of $(B_t:t\in V(T))$, $\UWT{G}{B_z}\in \Apex{\scr{G}}{a}$. As $\UWT{G_z}{B_z'}\subseteq \UWT{G}{B_z}$ and since $\Apex{\scr{G}}{a}$ is monotone, this gives $\UWT{G_z}{B_z'}\in \Apex{\scr{G}}{a}$. By definition of $Z$ and $w$, for each $Q\in \scr{T}_z$, $|B'_Q\setminus B'_z|\leq d$.

        Thus, by \cref{starDecomp}, for each $z\in Z'$, there exists $X_z\subseteq V(G_z)$ with $|X_z|\leq f(kn_z)/d+a$ such that $G_z-X_z$ admits a path-partition of width at most $\max(2d\,g(kn_z),d)=2d\,g(kn_z)$. Here, we use that $g$ is increasing and that $g(1)\geq 1$. 
        
        Let $X':=X\cup \bigcup_{z\in Z'}X_z$. As $f$ is superadditive
        $$\sum_{z\in Z'}|X_z|\leq \sum_{z\in Z'}(f(kn_z)/d+a)\leq f(k\sum_{z\in Z'}n_z)/d+a|Z'|.$$
        Recalling that $\sum_{z\in Z'}n_z=n$ and $|Z'|\leq 2n/d$, this gives
        $$\sum_{z\in Z'}|X_z| \leq (f(kn)+2an)/d.$$
        Recalling $|X|\leq kn/d$, this gives
        $$|X'|\leq (f(kn)+(2a+k)n)/d.$$
        Now, notice that for each connected component $C$ of $G-X'$, $C$ is contained in $G_z-X_z$ for some $z\in Z'$. Thus, $C$ admits a path-partition of width at most $2d\,g(kn_z)\leq 2d\,g(kn)$ as $g$ is increasing. As this is true for each connected component of $G-X'$, $G-X'$ itself admits a path-partition of width at most $2d\,g(kn)$. This gives the desired result.
    \end{proof}

    \begin{theorem}
        \label{flexResult}
        $K_h$-minor-free graphs admit $(f,g)$-flexible fan-partitions, where $f\in O_h(n\log(n))$ and $g\in O_h(\log^3(n))$.
    \end{theorem}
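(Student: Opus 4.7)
The plan is to combine \cref{rtwFlex} with \cref{mainLem} essentially in a black-box manner. Let $b,k\geq 1$ be the constants from \cref{GMST} for the given $h$, and let $a:=\max(h-5,0)$. By \cref{rtwFlex}, the class $\scr{G}$ of graphs with row treewidth at most $b$ admits $(f_0,g_0)$-flexible fan-partitions for some $f_0\in O(bn\log(n))$ and $g_0\in O(\log^3(n))$. \cref{mainLem} then produces $(f',g')$-flexible fan-partitions for the $K_h$-minor-free graphs with
\[f'(n):=f_0(kn)+(2a+k)n, \qquad g'(n):=2g_0(kn).\]
Since $b,k,a$ depend only on $h$, we immediately get $f'(n)\in O_h(n\log(n))$ and $g'(n)\in O_h(\log^3(n))$, which is the desired bound.

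The only thing to be careful about is that \cref{mainLem} requires $f_0$ to be superadditive, $g_0$ to be increasing, and $g_0(1)\geq 1$, whereas \cref{rtwFlex} only guarantees asymptotic $O$-bounds. I would address this by choosing explicit representatives in the asymptotic classes. For example, take $f_0(n):=Cbn\log(n+1)$ and $g_0(n):=C\log^3(n+1)+1$ for a sufficiently large constant $C$ so that the hypotheses of \cref{rtwFlex} are still satisfied. Superadditivity of $f_0$ is immediate because $\log(\cdot+1)$ is increasing, so $n\log(n+1)+m\log(m+1)\leq (n+m)\log(n+m+1)$; $g_0$ is plainly increasing and $g_0(1)\geq 1$. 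Any slack absorbed into constants only affects the hidden $O_h(\cdot)$ factors.

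With these representatives in hand, \cref{mainLem} yields the desired functions, and I would conclude by explicitly verifying the asymptotics: $f_0(kn)\in O_h(n\log(n))$, $(2a+k)n\in O_h(n)\subseteq O_h(n\log(n))$, so $f'\in O_h(n\log(n))$; and $g_0(kn)\in O_h(\log^3(kn))=O_h(\log^3(n))$, so $g'\in O_h(\log^3(n))$.

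The main obstacle, if there is one, is purely bookkeeping: picking $f_0,g_0$ to satisfy the technical regularity hypotheses of \cref{mainLem} without worsening the asymptotics. The conceptual heavy lifting has already been done in \cref{rtwFlex,mainLem}, so the proof is essentially an invocation of the two lemmas in sequence together with a routine asymptotic check.
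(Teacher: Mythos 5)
Your proposal is correct and follows the same route as the paper: invoke \cref{rtwFlex} to get $(f_0,g_0)$-flexible fan-partitions for graphs of row treewidth at most $b$, feed these into \cref{mainLem}, and check the asymptotics. The paper simply asserts ``we may take $f$ to be superadditive, and $g$ to be increasing and to satisfy $g(1)\geq 1$'' without detail, whereas you correctly justify it by passing to explicit pointwise-larger representatives (valid because the notion of a $(k,w)$-fan-partition is monotone in both parameters), which is a welcome bit of extra rigour but not a different argument.
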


    \begin{proof}
        Let $f,g$ be from \cref{rtwFlex}. So $f\in O(bn\log(n))$ and $g\in O(\log^3(n))$. We may take $f$ to be superadditive, and $g$ to be increasing and to satisfy $g(1)\geq 1$. Thus, \cref{mainLem} is applicable. Finally, observe that $n\mapsto f(kn)+(2\max(h-5,0)+k)n \in O_h(n\log(n))$ and $n\mapsto 2g(kn) \in O_h(\log^3(n))$, where $b,k$ are from \cref{GMST} and depend only on $h$.
    \end{proof}

    \cref{main} follows as a direct corollary of \cref{flexResult,unvToInd} by, for each $K_t$-minor free graph $G$ with $n:=|V(G)|$, fixing $d:=\sqrt{n}/\log(n)$ in the definition of flexible fan-partitions.

    \paragraph{Acknowledgements.} The author thanks David Wood for his supervision and suggestions to improve this paper.

    \bibliography{Ref.bib}
\end{document}